\documentclass[a4paper,9pt]{article}

\usepackage[english]{babel}
\usepackage[utf8]{inputenc}
\usepackage{amsfonts}
\usepackage{amsmath}
\usepackage{graphicx}
\usepackage[colorinlistoftodos]{todonotes}
\usepackage{tikz-cd}
\usepackage{amsthm}
\usepackage{lipsum}
\usepackage{epigraph}
\setcounter{tocdepth}{2}

\theoremstyle{definition}

\newtheorem{remark}{Remark}
\theoremstyle{plain}
\newtheorem{proposition}{Proposition}
\newtheorem{theorem}{Theorem}

\DeclareMathOperator{\lt}{LT}

\DeclareMathOperator{\lc}{LC}

\DeclareMathOperator{\Ima}{Im}
\DeclareMathOperator{\Tor}{Tor}
\DeclareMathOperator{\Ext}{Ext}

\DeclareMathOperator{\tl}{TL}

\title{Constructing a free resolution and few remarks on bar homology of Temperley-Lieb Algebra $\tl_3$ }
\author{SOUTRIK ROY CHOWDHURY}
\date{}

\begin{document}
\maketitle
\begin{abstract}
In this paper I attempt to compute the Anick's resolution of Temperley-Lieb algebra $\tl_3$ and then I compute the bar homology of $\tl_3$ or equivalently the $\Tor_\ast^{\tl_3}(\mathbb{C},\mathbb{C})$. It shows that the differentials in the formulae of Anick's resolution and of the bar homology depends on $\tau$ and thus $\Tor_\ast^{\tl_3}(\mathbb{C},\mathbb{C})$ depends on $\tau$.
\end{abstract}
For $\tau \in \mathbb{C}$, the Temperley-Lieb algebra $\tl_n$ is an associative $\mathbb{C}$ -algebra generated by $ 1, e_1, e_2, \dots, e_{n-1} $ modulo the relations:
\begin{itemize}
\item $e_ie_j = e_je_i$ for $|i-j| > 1$
\item $e_ie_je_i = e_i$ for $|i-j| = 1$
\item $e_ie_i = \tau e_i$.
\end{itemize}
We simply write the algebra $\tl_n$ when $\tau$ is understood. Therefore the structure of the algebra $\tl_3$ is $\{ 1, e_1,e_2 | e_1e_2e_1 - e_1, e_2e_1e_2 - e_2, e_1e_1 - \tau e_1, e_2e_2 - \tau e_2 \}$. The definition of $\tl_n$ can be described by tangle diagram and there exists maps between braid group $B_n$ and $\tl_n$ which can also be motivated by tangle diagram. In appendix I write about it in a little details where I also compute the Gr\"{o}bner basis for $B_3$.\\
One of many ways to compute a free resolution of a graded augmented algebra is resolution constructed by Anick back in 1986 [1]. This resolution shows nice combinatorial construction of the homology classes of the algebra where we actively involve computed Gr\"{o}bner basis of the algebra. In this paper I attempt to construct Anick's resolution for algebra $\tl_3$ and reach some conclusion regarding the general structure of the resolution. Then I compute bar homology of $\tl_3$ from the computed Anick's resolution and also in that case make remark on the general structure of bar homology. It is observed that both are dependent on $\tau$.\\ 
 We recall the formula for Anick's resolution:
 \begin{theorem}[Anick's resolution]
Let $A$ be a graded algebra with augmentation (i.e. there exists an augmentation map $\epsilon : A \rightarrow K$), and let $C_n$ be the set of $n$-chains. We have a resolution of $A$ of the following form:
\[ \begin{tikzcd}
\dots \arrow{r}{d_{n+1}} & C_n \otimes A \arrow{r}{d_n} & C_{n-1} \otimes A \arrow{r}{d_{n-1}} & C_{n-2} \otimes A \arrow{r}{d_{n-2}} & \dots \\
 \dots \arrow{r}{d_2} & C_1 \otimes A \arrow{r}{d_1} & C_0 \otimes A \arrow{r}{d_0} & C_{-1} \otimes A \arrow{r}{\epsilon} & K \arrow{r} & 0
\end{tikzcd}\]
with splitting inverse maps $i_n : \ker d_{n-1} \rightarrow C_n \otimes A$ (which, unlike $d_n$ need not to be homomorphisms of modules). Where:
\begin{itemize}
\item $d_0( x \otimes 1) = 1 \otimes x$.
\item $i_{-1}(1) = 1 \otimes 1$.
\item $i_0( 1 \otimes x_{i_1}x_{i_2}\dots x_{i_n}) = x_{i_1} \otimes x_{i_2}\dots x_{i_n}$.
\item $d_{n+1}(gt \otimes 1) = g \otimes t - i_nd_n( g \otimes t)$ for all ($n+1$)-chains $gt$, with tail $t$.
\item $i_n(u) = \alpha g \otimes c + i_n( u - \alpha d_n(g \otimes c))$ for all $u \in \ker d_{n-1}$ with leading term $f \otimes s$, where $\overline{fs} = \overline{gc}$, and $f$ is a ($n-1$)-chain and $g$ is a $n$-chain and $\alpha =\lc(f \otimes s)$. The bar over $fs$ and $gc$ denotes reduction of them to normal forms.
\end{itemize}
\end{theorem}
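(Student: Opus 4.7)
The plan is to establish, simultaneously by induction on $n$, that the sequence is a complex ($d_n \circ d_{n+1} = 0$) and that each $i_n$ is a right inverse to $d_n$ on $\ker d_{n-1}$; together these yield exactness. The mutual recursion in the definitions of $d_{n+1}$ and $i_n$ forces the two statements to be proved in tandem.

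For the base case I would verify directly that $\epsilon \circ d_0 = 0$ (the image of $d_0$ sits in the augmentation ideal of $A$) and that $i_{-1}$ splits $\epsilon$. For $i_0$, peeling off the first letter of a normal-form monomial and applying $d_0$ reproduces the monomial, so $d_0 \circ i_0 = \id$ on $\ker \epsilon$; on a general element of the kernel one uses linearity together with the recursive clause.

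For the inductive step, suppose $d_n$ and $i_{n-1}$ have been constructed with $d_{n-1} \circ i_{n-1} = \id$ on $\ker d_{n-2}$ and $d_{n-1} \circ d_n = 0$. Then $d_n(g \otimes t) \in \ker d_{n-1}$, and applying the defining formula for $d_{n+1}$ gives
\[
d_n\, d_{n+1}(gt \otimes 1) \;=\; d_n(g \otimes t) - d_n\, i_n\, d_n(g \otimes t),
\]
which vanishes as soon as $d_n \circ i_n = \id$ on $\ker d_{n-1}$. Thus the inductive step reduces entirely to the construction and verification of $i_n$.

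The crux is that the recursive formula for $i_n$ terminates. The natural well-founded invariant is the leading term $f \otimes s$ of $u$ in the monomial order compatible with the Gr\"{o}bner basis used to define $n$-chains. The matching condition $\overline{fs} = \overline{gc}$ built into the notion of an $n$-chain $gc$ is engineered so that $\alpha\, d_n(g \otimes c)$ has leading term exactly $f \otimes s$; consequently $u - \alpha\, d_n(g \otimes c)$ has a strictly smaller leading term, and iterating drives the residue to zero. This matching lemma, which rests on the overlap combinatorics of the Gr\"{o}bner basis and on how the tail-splitting in $d_n$ interacts with the chain structure, is the main obstacle and is the step where the Anick formalism does its real work. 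Once termination is established, $d_n \circ i_n = \id$ on $\ker d_{n-1}$ follows by telescoping the recursion, and the induction closes.
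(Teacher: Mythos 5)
The paper does not prove this theorem at all: it states it as a recalled result and immediately defers to the references (Anick's original article and [2]), so there is no in-paper argument to compare yours against. Your outline does, however, reproduce the skeleton of Anick's own proof --- a simultaneous induction constructing $d_{n+1}$ and $i_n$, with $d_n\circ i_n=\id$ on $\ker d_{n-1}$ giving $\ker d_{n-1}\subseteq\Ima d_n$ and hence exactness --- so the strategy is the right one.

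That said, there is a genuine gap, and you have located it yourself without closing it: the ``matching lemma.'' For the recursion defining $i_n$ to make sense and terminate you need three things you currently only assert. First, \emph{existence}: every nonzero $u\in\ker d_{n-1}$ must have leading term $f\otimes s$ with $f$ an $(n-1)$-chain such that $\overline{fs}$ begins with an $n$-chain $g$ (equivalently, $fs$ contains an obstruction overlapping the tail of $f$); this is where one actually uses $d_{n-1}(u)=0$, since an arbitrary element of $C_{n-1}\otimes A$ need not have a leading term of this form. Second, \emph{uniqueness}: the pair $(g,c)$ with $\overline{gc}=\overline{fs}$ must be canonically determined (Anick takes $g$ to be the minimal $n$-chain prefix), or $i_n$ is not well defined. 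Third, \emph{cancellation}: the claim that $\lt\bigl(\alpha\, d_n(g\otimes c)\bigr)=\alpha\, f\otimes s$ requires knowing that in $d_n(g\otimes c)=f\otimes \overline{tc}-i_{n-1}d_{n-1}(f\otimes \overline{tc})$ the correction term $i_{n-1}d_{n-1}(\cdots)$ consists only of strictly smaller monomials; this must be carried as an extra clause of the induction hypothesis on $i_{n-1}$. Without these three points the descent on leading terms is not justified, and they constitute essentially all of the combinatorial content of the theorem. A smaller point: termination of the descent also needs the monomial order to be well-founded on each graded piece (true here because DEGLEX on finitely many generators leaves only finitely many monomials below any fixed degree), which is worth one sentence.
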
 
For proof of the theorem 1 we refer to [2]. For more details regarding the resolution and concept of chains, leading terms $\lt$ and leading coefficients $\lc$ we refer to standard text [2,3].\\
Assuming the DEGLEX order $1 < e_1 < e_2$, Gr\"{o}bner basis of $\tl_3$ is given by the relations themselves i.e. $\{e_1e_2e_1 - e_1, e_2e_1e_2 - e_2, e_1e_1 - \tau e_1, e_2e_2 - \tau e_2 \}$. It is easy to verify that the S -polynomial between $e_1e_2e_1$ and $e_2e_1e_2$ is $0$. Similarly the S -polynomials between $e_1e_2e_1$ and $e_1e_1$ and $e_2e_1e_2$ and $e_2e_2$ in pairs respectively are $0$. Therefore the relations themselves satisfy Bergman's Diamond Lemma [4] and hence form the Gr\"{o}bner basis of the algebra $\tl_3$.\\
Elements of chain $C_0$ are $1$, $e_1$, $e_2$.\\
Elements of chain $C_1$ are $e_1e_2e_1, e_2e_1e_2, e_1e_1, e_2e_2$.\\
Elements of chain $C_2$ are $e_1e_2e_1e_1, e_1e_2e_1e_2, e_2e_1e_2e_2, e_2e_1e_2e_1, e_1e_1e_2e_1$,\\ $e_2e_2e_1e_2, e_1e_1e_1, e_2e_2e_2$.\\
Elements of chain $C_3$ are $e_1e_2e_1e_1e_1, e_1e_2e_1e_1e_2e_1, e_1e_2e_1e_2e_2, e_1e_2e_1e_2e_1e_2,$\\ $ e_2e_1e_2e_2e_2, e_2e_1e_2e_2e_1e_2, e_2e_1e_2e_1e_1, e_2e_1e_2e_1e_2e_1$,\\ $e_1e_1e_2e_1e_2, e_1e_1e_2e_1e_1, e_1e_1e_1e_1, e_1e_1e_1e_2e_1,$\\ $e_2e_2e_2e_2, e_2e_2e_2e_2e_1e_2, e_2e_2e_1e_2e_1, e_2e_2e_1e_2e_2$.\\
Rest of chains are constructed in the similar fashion. Though its a bit difficult to write the exact formulae for chains but the construction involves nice combinatorics and we deduce the following proposition:
\begin{proposition}
The number of elements in chain $C_{n+1}$ are $2$ times the number of elements in chain $C_n$ for $n \geq 1$. 
\end{proposition}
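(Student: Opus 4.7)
The plan is to exhibit a uniform two-to-one extension rule from $C_n$ to $C_{n+1}$. I would first classify each chain $c \in C_n$ by the shape of its rightmost obstruction in the Anick decomposition: call $c$ of \emph{Type A} if this obstruction is $e_ie_i$, and of \emph{Type B} if it is $e_ie_je_i$, where $\{i,j\}=\{1,2\}$. The proposition then reduces to showing that every Type A and every Type B chain in $C_n$ admits exactly two valid extensions to $C_{n+1}$, one of each type.

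Next I would enumerate the candidate tails. For a chain $c$ with last obstruction $v$, an $(n+1)$-chain extension $c\cdot t$ corresponds to picking a new obstruction $w$ whose prefix is a proper nontrivial suffix of $v$ and which ends at the last letter of $c\cdot t$; the tail $t$ is then forced by $w$ and the chosen overlap length. If $v=e_ie_i$, the only proper nontrivial suffix is $e_i$, giving the two candidates $w=e_ie_i$ (append $e_i$; Type A successor) and $w=e_ie_je_i$ (append $e_je_i$; Type B successor). If $v=e_ie_je_i$, the proper nontrivial suffixes are $e_i$ and $e_je_i$, producing three candidates: $w=e_ie_i$ (append $e_i$; Type A), $w=e_ie_je_i$ (append $e_je_i$; Type B), and $w=e_je_ie_j$ (append $e_j$; Type B).

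The crucial step is to rule out exactly one candidate in the Type B case, namely $w=e_ie_je_i$ overlapping $v$ in a single letter. Appending $e_je_i$ after $v=e_ie_je_i$ produces the word $\cdots e_ie_je_ie_je_i$, in which the substring $e_je_ie_j$ --- itself a Gr\"obner obstruction --- occurs strictly between the old $v$ and the new $w$. Anick's minimality condition on chains then forces this intermediate obstruction to be counted, so $c\cdot e_je_i$ lies in $C_{n+2}$ rather than $C_{n+1}$ and must be discarded. Removing this single candidate leaves exactly two admissible extensions for a Type B chain, matching the two admissible extensions for a Type A chain.

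Combining the two cases yields $|C_{n+1}|=2|C_n|$ for all $n\geq 1$, and the base $|C_2|=8=2\cdot|C_1|$ is visible in the explicit enumeration preceding the proposition. The main obstacle is the exclusion step in the Type B analysis: I must verify both that the three-letter pattern $e_je_ie_j$ genuinely appears in the rejected candidate (so that it really lifts the chain degree by two), and that no analogous hidden intermediate obstruction is created in any of the accepted extensions --- this requires inspecting the three-letter windows inside each appended tail against the four-element Gr\"obner basis. Once this exclusion is settled, the remaining bookkeeping is routine, and the proposition follows by simple induction.
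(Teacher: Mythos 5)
Your overall strategy --- classify each chain by its final obstruction, enumerate candidate extensions, and show exactly two survive --- is more detailed than the paper's proof (which is a one-line assertion deferring to the construction of chains in [3]), but your Type~B analysis contains a genuine error. The set of admissible extensions of a chain is not determined by its last obstruction alone: by Anick's definition the new obstruction $f_{m+1}$ must begin \emph{strictly after the end of the second-to-last obstruction} $f_{m-1}$, so you must also track how the last obstruction overlaps its predecessor. For a Type~B chain whose last obstruction $e_ie_je_i$ overlaps its predecessor in \emph{two} letters --- e.g.\ $e_1e_2e_1e_2\in C_2$, where $e_2e_1e_2$ sits at positions $2$--$4$ over $e_1e_2e_1$ at positions $1$--$3$ --- your candidate ``append $e_j$'' places the new obstruction $e_je_ie_j$ at positions $3$--$5$, beginning at the end of the \emph{first} obstruction, which is forbidden; and your excluded candidate ``append $e_je_i$'' is in this situation a genuine extension, since no shorter tail preempts it. Concretely, your rule would put $e_1e_2e_1e_2e_1$ and $e_2e_1e_2e_1e_2$ into $C_3$ and would omit $(e_1e_2)^3$ and $(e_2e_1)^3$, contradicting the paper's explicit list of $C_3$. (The total count $16$ happens to come out right, but the proof is not.)

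Two further points. First, your exclusion mechanism is not the correct one: the candidate ``append $e_je_i$'' (when it must be discarded) fails because of \emph{tail minimality} --- the proper prefix $e_j$ of the tail already completes an $(n+1)$-chain --- not because the word is promoted to $C_{n+2}$; in fact $g\cdot e_je_i$ is then not a chain of any degree, merely a prechain with a proper prefix that is a chain. Second, the repair is to classify by the length of the last tail rather than by the type of the last obstruction: if the tail has length $1$, the new obstruction must begin at the final letter $e_i$ of $g$, giving exactly the two choices $e_ie_i$ and $e_ie_je_i$; if the tail has length $2$ (which forces the last obstruction to be $e_ie_je_i$ with a one-letter overlap), the window has two positions, yielding three candidates of which ``append $e_je_i$'' is killed by tail minimality. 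In both cases exactly two extensions survive, their tails again have length $1$ or $2$, and the induction closes. With that substitution your argument becomes a correct and complete proof, and a more informative one than the paper supplies.
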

\begin{proof}
This is easy to verify. Each element in $C_n$ generates two elements for $C_{n+1}$ following the defined way of construction of chains [3]. 
\end{proof}
Now I would like to make remarks on the length of elements in chain $C_n$ which is essential when we would like to construct the Hilbert series for $\tl_3$ using chains [3]. I don't compute the Hilbert series for $\tl_3$ here but using the remarks one can construct it easily.
\begin{remark}
In $C_n$ when $n$ is even ($n \geq 2$) elements of length (sometime we call them degree instead of length) $(n+1), (n+2), (n+3), \dots, \frac{3n+2}{2}$ are present.
\end{remark}
\begin{remark}
In $C_n$ when $n$ is odd ( $n \geq 3$) elements of length $(n+1), (n+2), (n+3), \dots, \frac{3(n+1)}{2}$ are present.
\end{remark}
The verification of these two remarks depends on the way we take a careful look in the construction of chains and its easy to check them.
\newpage
Now I compute the Anick's resolution (theorem 1) of the algebra $\tl_3$. In theorem 1, field $K$ will become $\mathbb{C}$ and $A$ is $\tl_3$. Namely the following formulae hold:
\[ d_0(e_1 \otimes 1) = 1 \otimes e_1 \hspace{5mm} d_0(e_2 \otimes 1) = 1 \otimes e_2 \]
Now $d_1: C_1 \otimes \tl_3 \rightarrow C_0 \otimes \tl_3$ are given by
\[ d_1( e_1e_2e_1 \otimes 1) = e_1 \otimes e_2e_1 - e_1 \otimes 1 \]
\[ d_1(e_2e_1e_2 \otimes 1) = e_2 \otimes e_1e_2 - e_2 \otimes 1 \]
\[ d_1(e_1e_1 \otimes 1) = e_1 \otimes e_1 - \tau e_1 \otimes 1 \]
\[ d_1(e_2e_2 \otimes 1) = e_2 \otimes e_2 - \tau e_2 \otimes 1 \]
The differential $d_2: C_2 \otimes \tl_3 \rightarrow C_1 \otimes \tl_3$ is given by:
\[ d_2(e_1e_2e_1e_1 \otimes 1) = e_1e_2e_1 \otimes e_1 - \tau e_1e_2e_1 \otimes 1 + e_1e_1 \otimes 1 \]
\[ d_2(e_1e_2e_1e_2 \otimes 1) = e_1e_2e_1 \otimes e_2 \]
\[ d_2(e_2e_1e_2e_2 \otimes 1) = e_2e_1e_2 \otimes e_2 - \tau e_2e_1e_2 \otimes 1 + e_2e_2 \otimes 1 \]
\[ d_2(e_2e_1e_2e_1 \otimes 1) = e_2e_1e_2 \otimes e_1 \]
\[ d_2(e_1e_1e_2e_1 \otimes 1) = e_1e_1 \otimes e_2e_1 + \tau e_1e_2e_1 \otimes 1 - e_1e_1 \otimes 1 \]
\[ d_2(e_2e_2e_1e_2 \otimes 1) = e_2e_2 \otimes e_1e_2 + \tau e_2e_1e_2 \otimes 1 - e_2e_2 \otimes 1 \]
\[ d_2(e_1e_1e_1 \otimes 1) = e_1e_1 \otimes e_1 \]
\[ d_2(e_2e_2e_2 \otimes 1) = e_2e_2 \otimes e_2 \]
For $d_3: C_3 \otimes \tl_3 \rightarrow C_2 \otimes \tl_3$ the formulae look like this:
\[ d_3(e_1e_2e_1e_1e_1 \otimes 1) = e_1e_2e_1e_1 \otimes e_1 - e_1e_1e_1 \otimes 1 \]
\[ d_3(e_1e_2e_1e_1e_2e_1 \otimes 1) = e_1e_2e_1e_1 \otimes e_2e_1 + \tau e_1e_2e_1e_2 \otimes e_1 - e_1e_2e_1e_1 \otimes 1 - e_1e_1e_2e_1 \otimes 1 \]
\[ d_3(e_1e_2e_1e_2e_2 \otimes 1) = e_1e_2e_1e_2 \otimes e_2 - \tau e_1e_2e_1e_2 \otimes 1 \]
\[ d_3(e_1e_2e_1e_2e_1e_2 \otimes 1) = e_1e_2e_1e_2 \otimes e_1e_2 - e_1e_2e_1e_2 \otimes 1 \]
\[ d_3(e_2e_1e_2e_2e_2 \otimes 1) = e_2e_1e_2e_2 \otimes e_2 - e_2e_2e_2 \otimes 1 \]
\[ d_3(e_2e_1e_2e_2e_1e_2 \otimes 1) = e_2e_1e_2e_2 \otimes e_1e_2 + \tau e_2e_1e_2e_1 \otimes e_2 - e_2e_1e_2e_2 \otimes 1 - e_2e_2e_1e_2 \otimes 1 \]
\[ d_3( e_2e_1e_2e_1e_1 \otimes 1) = e_2e_1e_2e_1 \otimes e_1 - \tau e_2e_1e_2e_1 \otimes 1 \]
\[ d_3(e_2e_1e_2e_1e_2e_1 \otimes 1) = e_2e_1e_2e_1 \otimes e_2e_1 - e_2e_1e_2e_1 \otimes 1 \]
\[ d_3(e_1e_1e_1e_1 \otimes 1) = e_1e_1e_1 \otimes e_1 - e_1e_1e_1 \otimes 1 \]
\[ d_3(e_1e_1e_1e_2e_1 \otimes 1) = e_1e_1e_1 \otimes e_2e_1 - e_1e_1e_1 \otimes 1 \]
\[ d_3(e_2e_2e_2e_2 \otimes 1) = e_2e_2e_2 \otimes e_2 - e_2e_2e_2 \otimes 1 \]
\[ d_3(e_2e_2e_2e_1e_2 \otimes 1) = e_2e_2e_2 \otimes e_1e_2 - e_2e_2e_2 \otimes 1 \]
\[ d_3(e_1e_1e_2e_1e_2 \otimes 1) = e_1e_1e_2e_1 \otimes e_2 - \tau e_1e_2e_1e_2 \otimes 1 \]
\[ d_3(e_1e_1e_2e_1e_1 \otimes 1) = e_1e_1e_2e_1 \otimes e_1 - \tau e_1e_2e_1e_1 \otimes 1 - \tau e_1e_1e_2e_1 \otimes 1 + e_1e_1e_1 \otimes 1 \]
\[ d_3(e_2e_2e_1e_2e_1 \otimes 1) = e_2e_2e_1e_2 \otimes e_1 - \tau e_2e_1e_2e_1 \otimes 1 \]
\[ d_3(e_2e_2e_1e_2e_2 \otimes 1) = e_2e_2e_1e_2 \otimes e_2 - \tau e_2e_1e_2e_2 \otimes 1 - \tau e_2e_2e_1e_2 \otimes 1 + e_2e_2e_2 \otimes 1 \]
\begin{remark}
So we see how to construct formula $d_{n+1}$ using information available from $d_n$ and $i_n$. It is also seen that most formula are identical i.e. one can get one formula from other just by replacing $e_1$ by $e_2$ and vice versa. It is possible as the construction of there chains are identical. I didn't write the general formula $d_n: C_n \otimes \tl_3 \rightarrow C_{n-1} \otimes \tl_3$ but it can be constructed using combinatorics. The construction will be identical to one computed in [ex 4.3.1, 2]. We see that each differential shows nice combinatorial interpretation of the structure of the chains and hence of the algebra.
\end{remark}
\begin{proposition}
In each $d_n: C_n \otimes \tl_3 \rightarrow C_{n-1} \otimes \tl_3$ for $n \geq 1$ some formulae depend on $\tau$. Therefore differentials $d_n$ in Anick's resolution for all $n$ depend on $\tau$.
\end{proposition}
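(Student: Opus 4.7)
The plan is to exhibit, for every $n \ge 1$, a specific $n$-chain $c_n$ whose differential $d_n(c_n \otimes 1)$ contains a nonzero $\tau$-dependent coefficient; this immediately implies the proposition. For $n = 1, 2, 3$ the base cases are already witnessed by the explicit formulas tabulated above: the chains $e_1 e_1$, $e_1 e_2 e_1 e_1$ and $e_1 e_2 e_1 e_1 e_2 e_1$ each contribute a term carrying a nontrivial $\tau$-coefficient (namely $-\tau e_1 \otimes 1$, $-\tau e_1 e_2 e_1 \otimes 1$ and $\tau e_1 e_2 e_1 e_2 \otimes e_1$ respectively).

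For general $n$ I would proceed by induction using the recursion
\[ d_{n+1}(gt \otimes 1) = g \otimes t - i_n d_n(g \otimes t). \]
Since $\tau$ enters the resolution only through the Gr\"obner reductions $e_1 e_1 \to \tau e_1$ and $e_2 e_2 \to \tau e_2$, what is required at step $n+1$ is an $(n+1)$-chain $c_{n+1} = gt$ for which reducing $d_n(g \otimes t)$ to normal form inside $i_n$ forces at least one application of these rules. A natural candidate is the alternating family extending the base examples: append blocks of the form $e_2 e_1$ or $e_1 e_2$ to the seed $e_1 e_1$, keeping one $e_i e_i$ subword available for reduction at every level. The doubling behavior of $|C_n|$ recorded in the preceding proposition guarantees that such chains continue to exist in $C_n$ for all $n$.

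The main obstacle, and the reason for preferring this family over the simpler sequence $e_1^{n+1}$, is that $\tau$-coefficients can in fact cancel. This is illustrated by $d_2(e_1 e_1 e_1 \otimes 1) = e_1 e_1 \otimes e_1$: the $-\tau e_1 \otimes e_1$ coming from $d_1(e_1 e_1 \otimes e_1)$ is annihilated precisely by the $\tau e_1 \otimes e_1$ produced when $e_1 \otimes e_1 e_1$ is reduced to normal form. The alternating blocks separate successive $e_i e_i$ subwords and, under DEGLEX, make the $\tau$-bearing term the unique leading summand of $d_n(g \otimes t)$, so no such cancellation is possible. The crux of the induction is therefore a finite combinatorial check at each level that the leading term under DEGLEX carries a $\tau$-factor and is not matched by any competing summand of $i_n d_n(g \otimes t)$; once this is verified for the chosen family, $\tau$-dependence propagates through all $n$ and the proposition follows.
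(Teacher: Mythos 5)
Your strategy is essentially the one the paper itself uses: verify the $\tau$-dependence explicitly in low degree and then argue that the recursion $d_{n+1}(gt \otimes 1) = g \otimes t - i_n d_n(g \otimes t)$ propagates it to all higher $n$. Where you improve on the paper is in recognizing that this propagation is not automatic: your observation about $d_2(e_1e_1e_1 \otimes 1) = e_1e_1 \otimes e_1$ --- the $-\tau e_1 \otimes e_1$ coming from $d_1(e_1e_1 \otimes e_1)$ being exactly cancelled by the $\tau e_1 \otimes e_1$ produced when $e_1 \otimes e_1e_1$ is reduced to normal form --- is precisely the phenomenon the paper's proof silently ignores when it asserts that $\tau$ ``must'' appear in $d_n$ for $n > 3$ because it appeared before. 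Your base-case witnesses ($e_1e_1$, $e_1e_2e_1e_1$, $e_1e_2e_1e_1e_2e_1$) all check out against the tabulated formulas.

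However, as written your inductive step is a programme rather than a proof. The assertion that, for your alternating family, the DEGLEX-leading term of $d_n(g \otimes t)$ carries a $\tau$-factor and is not matched by any competing summand of $i_n d_n(g \otimes t)$ is exactly the content of the proposition for that family, and you defer it to ``a finite combinatorial check at each level'' without carrying it out or supplying a degree-independent invariant that the recursion preserves. There is also no argument that the proposed chains (the seed $e_1e_1$ with alternating blocks appended) actually occur in $C_n$ for every $n$: the doubling of $|C_n|$ counts chains but does not identify them, so you would need to trace the tail-attachment rule explicitly to confirm membership. The concrete missing step is therefore an inductive statement about which normal forms appear in $d_n$ of the chosen chain, strong enough to rule out the cancellation you yourself exhibit in the $e_1^{n+1}$ family. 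To be fair, the paper's proof has the same gap and does not even acknowledge that cancellation can occur, so your proposal is the more honest of the two; but neither constitutes a complete argument for all $n$.
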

\begin{proof}
Upto $d_3$ we see that formulae are dependent on $\tau$. It appears as there exist relations $e_1e_1 - \tau e_1$ and $e_2e_2 - \tau e_2$ and when we start computing the formula for $d_n$ for $n \leq 3$ we find $i_n$ where some time leading terms contain $\tau$ as leading coefficients. That's why we find such $\tau$ in $d_1$, $d_2$ and in $d_3$. For higher $d_n$ for $n >3$ as formula depends on previous one and on $i_n$ which is an inverse formula $d_n$ must depends on $\tau$ as in most cases as leading coefficients $\tau$ appear.
\end{proof}
To compute the bar homology of $\tl_3$ or equivalently $\Tor_\ast^{\tl_3}(\mathbb{C},\mathbb{C})$ we need to compute the homology of the complex $\{ (\mathbb{C}C_n \otimes_\mathbb{C} \tl_3) \otimes_{\tl_3} \tl_3, \bar{d_n} \}$ where the induced differential $\bar{d_n} = d_n \otimes 1$ only keeps those elements which are not annihilated by the augmentation of $\tl_3$. In other words under the identification $ (\mathbb{C}C_n \otimes_\mathbb{C} \tl_3) \otimes_{\tl_3} \tl_3 \cong \mathbb{C}C_n$ we have
\[ \bar{d_0}(e_1) = 0, \hspace{3mm} \bar{d_0}(e_2) = 0 \]
\[ \bar{d_1}(e_1e_2e_1) = -e_1, \hspace{2mm} \bar{d_1}(e_2e_1e_2) = -e_2, \hspace{2mm} \bar{d_1}(e_1e_1) = -\tau e_1, \hspace{2mm} \bar{d_1}(e_2e_2) = -\tau e_2 \]
\[ \bar{d_2}(e_1e_2e_1e_2) = \bar{d_2}(e_2e_1e_2e_1) = \bar{d_2}(e_1e_1e_1) = \bar{d_2}(e_2e_2e_2) = 0 \]
\[ \bar{d_2}(e_1e_2e_1e_1)= -\tau e_1e_2e_1 + e_1e_1, \hspace{2mm} \bar{d_2}(e_2e_1e_2e_2) = -\tau e_2e_1e_2 + e_2e_2 \]
\[ \bar{d_2}(e_1e_1e_2e_1) = \tau e_1e_2e_1 - e_1e_1, \hspace{2mm} \bar{d_2}(e_2e_2e_1e_2) = \tau e_2e_1e_2 - e_2e_2 \]
Similarly $\bar{d_3}: C_3 \rightarrow C_2$ have non-zero terms
\[ \bar{d_3}(e_1e_2e_1^3) = -e_1^3, \hspace{2mm} \bar{d_3}((e_1e_2e_1)^2) = -e_1e_2e_1^2 - e_1^2e_2e_1, \hspace{2mm} \bar{d_3}(e_1e_2e_1e_2^2 = -\tau (e_1e_2)^2 \]
\[ \bar{d_3}((e_1e_2)^3) = -(e_1e_2)^2, \hspace{2mm} \bar{d_3}(e_2e_1e_2^3) = -e_2^3, \hspace{2mm} \bar{d_3}((e_2e_1e_2)^2) = -e_2e_1e_2^2 - e_2^2e_1e_2 \]
\[ \bar{d_3}(e_2e_1e_2e_1^2) = -\tau (e_2e_1)^2, \hspace{2mm} \bar{d_3}((e_2e_1)^3)= -(e_2e_1)^2, \hspace{2mm} \bar{d_3}(e_1)^4 = -e_1^3, \hspace{2mm} \bar{d_3}(e_1^3e_2e_1) = -e_1^3 \]
\[ \bar{d_3}(e_2^4) = -e_2^3, \hspace{2mm} \bar{d_3}(e_2^3e_1e_2) = -e_2^3, \hspace{2mm} \bar{d_3}(e_1^2e_2e_1e_2) = -\tau (e_1e_2)^2, \] \[ \bar{d_3}(e_1^2e_2e_1^2) = -\tau e_1e_2e_1^2 - \tau e_1^2e_2e_1 + e_1^3 \]
\[ \bar{d_3}(e_2^2e_1e_2e_1) = -\tau (e_2e_1)^2, \hspace{2mm} \bar{d_3}(e_2^2e_1e_2^2) = -\tau e_2e_1e_2^2 - \tau e_2^2e_1e_2 + e_2^3 \]
\begin{remark}
In similar ways we can compute the remaining $\bar{d_n}$. We see that $\Ima(\bar{d_n})$ depends on $\tau$ and hence the homology of the complex depends on $\tau$ and so does $\Tor_\ast^{\tl_3}(\mathbb{C},\mathbb{C})$.
\end{remark}
Computation of Anick's resolution for other $\tl_n$ will be a little difficult as that time we need to encounter the first relation too but soon we can find formulae using combinatorial relationships. It would be interesting to compute bar homology for $\tl_n$ for $n \geq 4$ and it can be seen that $\Tor_\ast^{\tl_n}(\mathbb{C}, \mathbb{C})$ depends on $\tau$.\\
Another interesting exercise will be to compute $A_\infty$ -algebra structure associated to $\Ext$ -algebra of Temperley-Lieb algebra $\tl_n$. At this stage our computed Anick's resolution is not minimal but using perturbation methods to find minimal resolution from non-minimal Anick's resolution given in [6] and using methods of Merkuluv's construction [7] we can construct such higher homotopy algebra structure for $\tl_n$.
\section*{ Appendix:Relationship of $B_n$ with $\tl_n$}
Gr\"{o}bner basis of 3 strand braid group $B_3 = \langle \sigma_1, \sigma_2 | \sigma_1\sigma_2\sigma_1 - \sigma_2\sigma_1\sigma_2 \rangle$ is given by the set
\[ \{ \sigma_1\sigma_2\sigma_1 - \sigma_2\sigma_1\sigma_2 \}\hspace{1mm} \bigcup_{n=2}^\infty \{ -\sigma_2\sigma_1\sigma_2^2\sigma_1^{n-1} + (-1)^n \sigma_1\sigma_2^n\sigma_1\sigma_2 \} \]
which indeed satisfies the Bergman's Diamond lemma. There is a map from $B_n$ to $\tl_n$ given by
\[ \sigma_i \mapsto A + A^{-1}e_i \]
\[ \sigma_i^{-1} \mapsto A^{-1} + Ae_i \]
where $A \in \mathbb{C}$ such that $\tau = -A^2 - A^{-2}$. The definition of $\tl_n$ can be motivated in terms of tangle diagrams in $\mathbb{R} \times I$ . These
are similar to knot diagrams, except that they can include arcs with endpoints
on $\mathbb{R} \times \{0, 1\}$. Two tangles are considered the same if they are related by a sequence of isotopies and Reidemeister moves of the second and third type. The third
relation in the Temperley-Lieb algebra allows one to delete a closed loop at the
expense of multiplying by $\tau$ . Using these definitions, the map from $B_n$ to $\tl_n$
is given by resolving all crossings using the Kauffman skein relation.\\
It will de interesting to find whether representation of $B_n$ over $\tl_n$ is faithful or not for $n \geq 4$ when $\tau$ is transcendental. Though its a different context but maybe its worthwhile to connect Gr\"{o}bner basis of $B_n$ with bar homology of $\tl_n$ and existing research [5] to find the answer of the question. This will connect representation theory and homological algebra with low dimensional topology.

\newpage

\vspace{4mm}
ITI Road, Jyotinagar, 2nd Mile, Siliguri-734001,\\
email: roychowdhurysoutrik@gmail.com

\end{document}